\newtheorem{theorem}{Theorem}[section]
\newtheorem{definition}[theorem]{Definition}
\newtheorem{lemma}[theorem]{Lemma}
\newtheorem{example}[theorem]{Example}
\theoremstyle{definition}
\newtheorem{remark}[theorem]{Remark}
\numberwithin{equation}{section}
\newcommand{\N}{\mathbb{N}}
\renewcommand{\epsilon}{\varepsilon}
\renewcommand{\phi}{\varphi}
\renewcommand{\emptyset}{\varnothing}
\begin{document}
\title[Algebrability and Riemann integrability of the composite function]{Algebrability and Riemann integrability\\ of the composite function}
\author[D'Aniello]{E. D'Aniello}
\address[E. D'Aniello]{\mbox{}\newline \indent  Dipartimento di Matematica e Fisica, \newline\indent Universit\`a degli Studi della Campania ``Luigi Vanvitelli'', \newline\indent  Viale Lincoln n. 5,\newline\indent  81100 Caserta, Italy.}
\email{emma.daniello@unicampania.it}

\author[Fern\'andez]{J. Fern\'andez-S\'anchez}
\address[J. Fern\'andez-S\'anchez]{\mbox{}\newline \indent
	Instituto de Matem\'atica Interdisciplinar (IMI), \newline \indent
	Universidad Complutense de Madrid, \newline \indent
	Madrid (Spain), \newline \indent
	\sc{and}\newline \indent
	Grupo de investigaci\'on de Teor\'ia de C\'opulas y Aplicaciones, \newline \indent
	Universidad de Almer\'ia, \newline \indent
	Carretera de Sacramento s/n, \newline \indent
	04120 Almer\'ia (Spain).}
\email{juanfernandez@ual.es}

\author[Maiuriello]{M. Maiuriello}
\address[M. Maiuriello]{\mbox{}\newline \indent  Dipartimento di Matematica e Fisica, \newline\indent Universit\`a degli Studi della Campania ``Luigi Vanvitelli'', \newline\indent  Viale Lincoln n. 5,\newline\indent  81100 Caserta, Italy.}
\email{martina.maiuriello@unicampania.it}

\author[Seoane]{J.B. Seoane--Sep\'ulveda}
\address[J.B. Seoane--Sep\'ulveda]{\mbox{}\newline\indent Instituto de Matem\'atica Interdisciplinar (IMI), \newline\indent Departamento de An\'{a}lisis Matem\'{a}tico y Matem\'atica Aplicada,\newline\indent Facultad de Ciencias Matem\'aticas, \newline\indent Plaza de Ciencias 3, \newline\indent Universidad Complutense de Madrid,\newline\indent 28040 Madrid, Spain.}
\email{jseoane@ucm.es}

\begin{abstract}
In this note we show that there exist a $2^\mathfrak{c}$-generated free algebra $\mathcal{S} \subset \mathbb{R}^\mathbb{R}$ of Riemann integrable functions and a free algebra $\mathcal{C} \subset \mathbb{R}^{[0,1]}$ of continuous functions, having $\mathfrak{c}$-generators,  such that $r \circ c$ is not Riemann integrable for any $r \in \mathcal{S}$ and $c \in \mathcal{C}$. This result is the best possible one in terms of lineability within these families of functions and, at the same time, an improvement of a precious result (\cite[Theorem 2.7]{A}). In order to achieve our results we shall employ set theoretical tools such as the Fichtenholz-Kantorovich-Hausdorff theorem, Cantor-Smith-Volterra--type sets, and classical real analysis techniques. 
\end{abstract}

\keywords{lineability, algebrability, bounded real function.}
\subjclass[2020]{15A03, 46B87.}
\thanks{}
\maketitle

\section{Introduction and Preliminaries}

The search for linear structures of mathematical objects enjoying certain \textit{special} property has, for the past decade, become a sort of a trend in many  different areas of Mathematics and, as a consequence of this, a vast literature on this topic has recently been built, from Linear Chaos to Real and Complex Analysis, passing through Set Theory and Linear and Multilinear Algebra, Operator Theory, Topology and Measure Theory, Functional Analysis, or Abstract Algebra \cite{A,book,BAMS2014,BAMS2019}. 

Let us recall some terminology we shall need throughout this work (which can be found in, for instance, \cite{BAMS2014,book}). Assume that $X$ is a vector space and $\alpha$ is a cardinal number. Then a subset \,$A \subset X$ \,is said to be:
\begin{enumerate}
	\item[$\bullet$] {\it lineable} if there is an infinite dimensional vector space $M$ such that $M \setminus \{0\}\subset A$;
	\item[$\bullet$] {\it $\alpha$-lineable} if there exists a vector space $M$ with dim$(M) = \alpha$ and $M \setminus \{0\} \subset A$.
\end{enumerate}

And, provided that $X$ is a vector space contained in some (linear) algebra, then \,$A$ \,is called:
\begin{enumerate}
	\item[$\bullet$] {\it algebrable} if there is an algebra \,$M$ so
	that $M \setminus \{0\} \subset A$ and $M$ is infinitely generated, that is, the cardinality of any system of generators of $M$ is infinite;
	\item[$\bullet$] {\it strongly $\alpha$-algebrable} if there exists an $\alpha$-generated {\it free} algebra $M$ with $M \setminus \{0\} \subset A$.
	Recall that if $X$ is contained in a commutative algebra, then a set $B \subset X$ is a \textit{generating set} of some free algebra contained in \,$A$ \,if and only if for any $N \in \N$, any nonzero polynomial $P$ in \,$N$ variables without constant term and any distinct $f_1, \dots ,f_N \in B$, we have \,$P(f_1, \dots ,f_N) \in A\setminus\{0\}$. 
\end{enumerate}

Of course, any form of (infinite) algebrability implies lineability. We shall denote  by $\mathcal{B}$ the subset of  $ \mathbb{R}^{[0,1]}$ of  bounded functions.  This paper focuses on a very particular class of real valued functions: the bounded ones. Of course, the set $\mathcal{B}$ defined above is, in itself, a vector space and, simultaneously, an algebra as well. However, the question that arises is: How large (in terms of dimension) can this vector space get? And regarding seeing it as an algebra... How large (in terms of cardinality) can the set of free generators be for this algebra? Of course, and due to cardinality issues, the dimension of this vector space (and the cardinality of the set of generators for the potential free algebra) cannot exceed $2^\mathfrak{c}$.  

%

When dealing with lineability, one can encounter many different situations. Just to cite some illustrative (and now classical) examples, for instance:

\begin{example}\label{ex1}
	A function $f  \in \mathbb{R}^\mathbb{R}$ is said to be everywhere surjective (ES, for short) if  $f(I) = \mathbb{R}$ for every non-void open interval $I$ of $\mathbb{R}$. In \cite{arongurariyseoane2005} the authors showed that ES is, indeed, $2^\mathfrak{c}$-lineable.
\end{example}

\begin{example}
	Let us denote by $\widehat{C}(\mathbb{R})$ the subset of  $C(\mathbb{R})$ of continuous real-valued functions attaining their absolute maximum exactly once. In \cite{GurProb} it was proved that $\widehat{C}(\mathbb{R})$  is $2-$lineable but not $3-$lineable.
\end{example}

\begin{example}
In 1923, Sierpi\'nski and Zygmund \cite{sierpinskizygmund1923}  proved the existence of  a function \(f\colon\mathbb R\to\mathbb R\) such that, for any set \(Z\subset\mathbb R\) of cardinality $\mathfrak{c}$, the restriction \(f\vert_Z\) is not a Borel map {\rm (}and, in particular, not continuous{\rm )}. A  function \(f\colon\mathbb R\to\mathbb R\) is a Sierpi\'nski-Zygmund function if it sa\-tis\-fies the condition in Sierpi\'nski-Zygmund's Theorem. In 2012 \cite{gamezseoane2012}, the authors showed that the $2^{\mathfrak{c}}$-lineability of the set of Sierpi\'{n}ski-Zygmund functions is, actually, undecidable.
\end{example}

These previous examples illustrate that anything can actually occur and that intuition does not really help here. 

Our goal in this note is to improve \cite[Theorem 2.7]{A} by showing that there exist a $2^\mathfrak{c}$-generated free algebra $\mathcal{S} \subset \mathbb{R}^\mathbb{R}$ of Riemann integrable functions and a free algebra $\mathcal{C} \subset \mathbb{R}^{[0,1]}$ of continuous functions, having $\mathfrak{c}$-generators,  such that $r \circ c$ is not Riemann integrable for any $r \in \mathcal{S}$ and $c \in \mathcal{C}$.  In order to achieve this we shall need to make a smart use of several set theoretical techniques (such as the Fichtenholz-Kantorovich-Hausdorff  theorem) and classical real analysis tools as well. The notation shall be rather usual.

\section{The main result}

With the goal of being a self-contained paper, let just provide some necessary definitions and tools in order to provide a straight proof of Lemma \ref{mainthm} below (that originally appeared proved in \cite[Theorem 2.2]{ppp} as a consequence of a more intricate result). 

As it is common in certain grounds, if $X$ stands for any nonempty set, for any set $A\subseteq X$ we denote $A^{0}=X\setminus A$ and $A^{1}=A$. Let us recall the following definition (see, e.g., \cite{J}).

\begin{definition}
	Let $X$ be a nonempty set. A family $\mathcal{A}$ of subsets of $X$ is
	independent provided that for any finitely many distinct elements $%
	A_1,\ldots,A_n $ of $\mathcal{A}$ we have that $A_1^{\varepsilon_1}\cap%
	\cdots\cap A_n^{\varepsilon_n}\neq\emptyset$ for any $\varepsilon_1,\ldots,%
	\varepsilon_n\in\{0,1\}$.
\end{definition}

A classical set theoretical result, known as the Fichtenholz-Kantorovich-Hausdorff theorem (\cites{fich,h,J}) is also needed to achieve our goal.

\begin{theorem}[Fichtenholz-Kantorovich-Hausdorff]\label{FKH}
For any set $X$ of infinite cardinality there exists an independent family $\mathcal{A}\subseteq \mathcal{P}(X)$ of cardinality $2^{\text{card}(X)}$.
\end{theorem}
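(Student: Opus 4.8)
The plan is to prove the theorem by building an explicit independent family on a conveniently chosen index set of the same cardinality as $X$, and then transporting it back to $X$ through a bijection. Writing $\kappa = \card(X)$, the idea is not to work directly with $X$ but with the auxiliary set
\[
Y = \{(F,\mathcal{F}) : F \subseteq X \text{ finite}, \ \mathcal{F} \subseteq \P(F)\}.
\]
First I would check that $\card(Y) = \kappa$: since $\kappa$ is infinite, the collection of finite subsets of $X$ has cardinality $\kappa$, and to each such finite $F$ there correspond only finitely many families $\mathcal{F} \subseteq \P(F)$; hence $\card(Y) = \kappa \cdot \aleph_0 = \kappa$. Because $\card(Y) = \card(X)$, any independent family of cardinality $2^{\kappa}$ living in $\P(Y)$ can be pushed forward to one in $\P(X)$ via a bijection $X \to Y$, so it suffices to construct the family inside $\P(Y)$.

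The heart of the argument is the assignment sending each subset $A \subseteq X$ to
\[
\widehat{A} = \{(F,\mathcal{F}) \in Y : A \cap F \in \mathcal{F}\} \subseteq Y.
\]
I would then show that $\{\widehat{A} : A \subseteq X\}$ is independent and that $A \mapsto \widehat{A}$ is injective; together these yield an independent family of size $2^{\kappa}$. To verify independence, fix distinct sets $A_1,\dots,A_n \subseteq X$ and signs $\varepsilon_1,\dots,\varepsilon_n \in \{0,1\}$. Since the $A_i$ are pairwise distinct, for each pair $i \neq j$ one may pick a point in the symmetric difference $A_i \triangle A_j$; collecting these finitely many points into a finite set $F \subseteq X$ guarantees that the traces $A_1 \cap F, \dots, A_n \cap F$ are pairwise distinct.

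With such an $F$ in hand, I would set $\mathcal{F} = \{A_i \cap F : \varepsilon_i = 1\}$, so that $(F,\mathcal{F}) \in Y$. Then $(F,\mathcal{F}) \in \widehat{A_i}$ precisely when $A_i \cap F \in \mathcal{F}$: if $\varepsilon_i = 1$ this holds by construction, while if $\varepsilon_i = 0$ it fails because the traces are pairwise distinct and $A_i \cap F$ is not among those with $\varepsilon_j = 1$. Hence $(F,\mathcal{F})$ lies in $\widehat{A_1}^{\varepsilon_1} \cap \cdots \cap \widehat{A_n}^{\varepsilon_n}$, which is therefore nonempty; injectivity of $A \mapsto \widehat{A}$ is just the special case $n = 2$, $\varepsilon_1 = 1$, $\varepsilon_2 = 0$, giving $\widehat{A} \neq \widehat{B}$ whenever $A \neq B$.

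The only genuinely delicate point—and the step I expect to be the main obstacle for someone meeting the result for the first time—is discovering the right index set $Y$ of pairs (finite set, family of its subsets). Once this encoding is found, both the cardinality count and the independence verification become routine: the construction essentially reduces the infinite problem to the trivial finite fact that on a finite set $F$ the power set $\P(F)$ already separates the traces of distinct subsets. I would record the elementary cardinal arithmetic (namely $\kappa \cdot \aleph_0 = \kappa$ and that there are $\kappa$ many finite subsets of $X$) as a preliminary observation if it is not already available.
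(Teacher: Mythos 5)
Your proof is correct, but there is nothing in the paper to compare it with: Theorem \ref{FKH} is stated there as a classical result, with no proof, and attributed to the literature (Fichtenholz--Kantorovich, Hausdorff, and Jech's book). What you have written is precisely the standard argument from those sources --- Hausdorff's trick of replacing $X$ by the equipotent index set $Y$ of pairs $(F,\mathcal{F})$ with $F\subseteq X$ finite and $\mathcal{F}\subseteq\mathcal{P}(F)$, sending each $A\subseteq X$ to $\widehat{A}=\{(F,\mathcal{F})\in Y : A\cap F\in\mathcal{F}\}$, and separating traces by a finite set $F$ that meets every symmetric difference $A_i\triangle A_j$. All the steps check out: the count $\operatorname{card}(Y)=\kappa\cdot\aleph_0=\kappa$, the choice $\mathcal{F}=\{A_i\cap F:\varepsilon_i=1\}$, the verification that $(F,\mathcal{F})$ witnesses nonemptiness of the prescribed Boolean combination, and injectivity of $A\mapsto\widehat{A}$ as the case $n=2$. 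One further point in your favor: your construction delivers for free the strengthening that the paper proves separately right after stating the theorem, namely that every intersection $A_1^{\varepsilon_1}\cap\cdots\cap A_n^{\varepsilon_n}$ is in fact infinite. Indeed, every finite $F'\supseteq F$ still separates the traces of $A_1,\dots,A_n$, so the intersection $\widehat{A_1}^{\varepsilon_1}\cap\cdots\cap\widehat{A_n}^{\varepsilon_n}$ contains a witness $(F',\mathcal{F}')$ for $\kappa$-many distinct sets $F'$; the paper instead derives this infinitude by an indirect argument that invokes additional members of the independent family to shrink a putative finite intersection to the empty set.
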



Recall that $\mathcal{P}(X)$ denotes the power set of $X$. 


From Theorem \ref{FKH}, we have that  given any set $X$ of cardinality $\kappa \geq \aleph _{0}$, there is an independent family $\mathcal{A}$ of $2^{\kappa }$-many subsets of $X$. Moreover, we have that $A_{1}^{\varepsilon _{1}}\cap \cdots \cap A_{n}^{\varepsilon _{n}}$ is infinite for every $A_{1},\ldots ,A_{n}\in  \mathcal{A}$, $\varepsilon _{1},\ldots ,\varepsilon _{n}\in \{0,1\}$ and $n\in \mathbb{N}$. Indeed, by way of contradiction assume that $$\text{card}(A_{1}^{\varepsilon _{1}}\cap \cdots \cap A_{n}^{\varepsilon _{n}})=k\in \mathbb{N}\setminus \{0\},$$ 
\text{i.e.},
$$A_{1}^{\varepsilon _{1}}\cap \cdots \cap A_{n}^{\varepsilon _{n}}=\{x_{1},\ldots ,x_{k}\}.$$ 
For every $i\in \{1,\ldots ,k\}$, take $A_{x_{i}}\in \mathcal{A}$ and $\varepsilon_{x_{i}}\in \{0,1\}$ such that $x_{i}\notin A_{x_{i}}^{\varepsilon _{x_{i}}}$ (with $A_{x_i}$ not belonging to $\{A_1, \ldots , A_n\}$), then 
\begin{equation*}
	A_{1}^{\varepsilon _{1}}\cap \cdots \cap A_{n}^{\varepsilon _{n}}\cap
	A_{x_{1}}^{\varepsilon _{x_{1}}}\cap \cdots \cap A_{x_{k}}^{\varepsilon
		_{x_{k}}}=\emptyset .
\end{equation*}

Let us recall that, if $\mathcal{H}$ is a Hamel basis of $\mathbb{R}$ as a  $\mathbb{Q}-$vector space,  \linebreak 
$h_{1},\ldots ,h_{n},h_{n+1}\in \mathcal{H}$ and $c_{1},\ldots ,c_{n}\in 
\mathbb{Q}$, the set of points $x\in \mathbb{R}$  of the form $$c_{1}h_{1}+\ldots +c_{n}h_{n}+ch_{n+1}$$ with $c\in \mathbb{Q}$, is dense in $\mathbb{R}$. Also, given $\mathcal{H}$ a Hamel basis as before, and if  $A$  is a subset of $\mathcal{H}$, we shall denote by $f_{A}$ the $\mathbb{Q}-$linear function in $\mathbb{R}^\mathbb{R}$  such that, if $h \in\mathcal{H}$ it is
\begin{equation*}
	f_{A}(h)=\left\{ 
	\begin{array}{ll}
		1 & \text{if }h\in A, \\ 
		0 & \text{otherwise},%
	\end{array}%
	\right. 
\end{equation*}%
and $g_{A}$  is the function given by

\begin{equation}
	g_{A}(x):=\frac{f_{A}(x)}{1+\left\vert f_{A}(x)\right\vert }.  \label{g}
\end{equation}


Now, with the previous tools and notations at hand, we are able to state and prove the following tool we need that, as we mentioned above, it was already shown in \cite{ppp}. However, the technique in our proof is new and more direct.

\begin{lemma}\label{mainthm}
The subset $\mathcal{B}$ of  $ \mathbb{R}^{[0,1]}$ of  bounded functions is strongly $2^{\mathfrak{c}}-$algebrable.
\end{lemma}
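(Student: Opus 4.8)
The plan is to exhibit a free algebra of size $2^{\mathfrak c}$ sitting inside $\mathcal B$, using the independent family and the bounded $\mathbb Q$-linear functions $g_A$ that the excerpt has set up. The target cardinality $2^{\mathfrak c}$ strongly suggests indexing generators by an independent family on a set of size $\mathfrak c$: by Theorem \ref{FKH} applied to a Hamel basis $\mathcal H$ (which has cardinality $\mathfrak c$), there is an independent family $\{A_\gamma\}_{\gamma\in\Gamma}\subseteq\mathcal P(\mathcal H)$ with $\operatorname{card}(\Gamma)=2^{\mathfrak c}$. The candidate generating set will be $\{g_{A_\gamma}\}_{\gamma\in\Gamma}$, where each $g_{A_\gamma}=f_{A_\gamma}/(1+|f_{A_\gamma}|)$ is bounded (by $1$), so the algebra they generate already lives inside a bounded-plus-algebra; I will need to check boundedness of the products too, but since each factor is bounded by $1$ on its relevant domain, monomials in the $g_{A_\gamma}$ stay bounded, and so do finite $\mathbb R$-linear combinations of them.

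By the free-generator criterion quoted in the Preliminaries, I must show: for every $N$, every nonzero polynomial $P$ in $N$ variables with no constant term, and every distinct $\gamma_1,\dots,\gamma_N\in\Gamma$, the function $P(g_{A_{\gamma_1}},\dots,g_{A_{\gamma_N}})$ is bounded and \emph{not identically zero}. Boundedness is immediate. The heart of the argument is nonvanishing, and here is where the independent family does the work. Write $g_j:=g_{A_{\gamma_j}}$. The function $t\mapsto t/(1+|t|)$ is a bijection from $\mathbb R$ onto $(-1,1)$, so on each atom $A_{\gamma_1}^{\varepsilon_1}\cap\cdots\cap A_{\gamma_N}^{\varepsilon_N}$ of the family the values of $f_{A_{\gamma_j}}$ can be read off, and hence the values of $g_j$ can be controlled. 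The plan is to find a point $x\in[0,1]$ at which the tuple $(g_1(x),\dots,g_N(x))$ lands at a prescribed value making $P$ nonzero. I would first reduce $[0,1]$ to $\mathbb R$ (or work on a copy of the relevant structure inside $[0,1]$) so that the Hamel-basis machinery applies.

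\textbf{The main obstacle}, and the crux of the proof, is producing a single point where all $N$ coordinate functions take \emph{algebraically generic} values simultaneously, so that the nonzero polynomial $P$ cannot vanish there. The key observation is that the independence of $\{A_\gamma\}$ lets me pick, for any prescribed pattern, a Hamel basis element (or a rational combination of them) lying in exactly the chosen atom; combined with the density remark recalled just before the statement—that $\{c_1 h_1+\cdots+c_n h_n + c\,h_{n+1}: c\in\mathbb Q\}$ is dense in $\mathbb R$—I can make the values $f_{A_{\gamma_j}}(x)$ range over a dense set of tuples, hence the $g_j(x)$ range over a dense subset of $(-1,1)^N$ (with distinct coordinates arranged to avoid coincidences forced by the atom structure). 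Since a nonzero polynomial cannot vanish on a set with nonempty interior, and in fact cannot vanish on any dense subset of an open box, there must be a point where $P(g_1(x),\dots,g_N(x))\neq 0$.

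The remaining steps are routine bookkeeping: verify that the chosen $x$ lies in (or can be transported to) $[0,1]$, confirm that $P(g_1,\dots,g_N)\in\mathcal B$, and note that the cardinality $\operatorname{card}(\Gamma)=2^{\mathfrak c}$ gives exactly a $2^{\mathfrak c}$-generated free algebra, which is the maximal possible by the cardinality ceiling already noted in the Introduction. I expect the delicate point to be the precise matching between the independence of the sets $A_\gamma$ (a purely combinatorial statement about $\mathcal H$) and the analytic density of rational combinations (which is what forces the image tuples to be dense in $(-1,1)^N$); getting these two to cooperate—choosing the atom first, then using density \emph{within} that atom to hit the right box—is the one place where the argument must be written carefully rather than waved through.
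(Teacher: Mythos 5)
Your proposal is correct and follows essentially the same route as the paper's proof: the generators are the functions $g_{A}$ indexed by a Fichtenholz--Kantorovich--Hausdorff independent family on a Hamel basis, one picks Hamel elements in the atoms $A_1^{\varepsilon_1}\cap\cdots\cap A_n^{\varepsilon_n}$ so the values $f_{A_i}(r)=c_i$ are freely prescribable, and the extra term $c\,h_{n+1}$ (with $h_{n+1}$ in no $A_i$) is used to move $r$ into $[0,1]$ without disturbing those values, so that the achievable value tuples are dense in $(-1,1)^n$ and a nonzero polynomial, being continuous, cannot vanish on all of them. The paper phrases this as approximating a fixed point where $P\neq 0$, while you argue contrapositively from density; this is the same argument.
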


\begin{proof}
Let us take $\mathcal{H}$ a Hamel basis such that $1 \in  \mathcal{H}$. If $\mathcal{A}$ is a family of independent sets in $\mathcal{H}\backslash \{1\},$ we shall see that the functions $g_{A}$ (defined as above, \eqref{g}) are algebraically independent.  

If $g_{A_1}, \ldots, g_{A_n}$ were algebraically dependent, there would exists a polynomial (in $n$ variables) $P \neq 0$ (and without constant term) in such a way that 
$P\left( g_{A_{1}}(r),\ldots, g_{A_{n}}(r)\right) = 0$, $\forall r \in [0,1]$. Since $P\neq 0$ there exist $x_{1},\ldots ,x_{n}\in [0,1]$, such that $P\left( x_{1},\ldots ,x_{n}\right) \neq 0.$ Next, take $A_{1},\ldots , A_{n}\in \mathcal{A}$ and let 
	\begin{eqnarray*}
		h_{1} &\in &A_{1}^{1}\cap A_{2}^{0}\cap \ldots \cap A_{n}^{0} \\
		h_{2} &\in &A_{1}^{0}\cap A_{2}^{1}\cap A_{3}^{0}\cap \ldots \cap A_{n}^{0}\\
		\cdots & \qquad &\cdots  \qquad  \cdots  \qquad \cdots \\
		h_{n} &\in &A_{1}^{0}\cap \ldots \cap A_{n-1}^{0}\cap A_{n}^{1} \\
		h_{n+1} &\in &A_{1}^{0}\cap \ldots \cap A_{n-1}^{0}\cap A_{n}^{0}.
	\end{eqnarray*}
	
If $r=c_{1}h_{1}+\ldots +c_{n}h_{n}+ch_{n+1},$we have that $f_{A_{i}}(r)=c_{i} \in \mathbb{Q}$ for every $i \in \{1, 2, \ldots, n\}$.  Once we fix  $\delta >0,$ we can find appropriate $c_{i}$'s in such a way that  $$\left\vert x_{i}-\frac{c_{i}}{1+c_{i}}\right\vert <\delta.$$

Thus, we can also find values of  $c_{i}$'s and $c$  in such a way that:
	\begin{equation*}
		\left\vert P\left( x_{1},\ldots ,x_{n}\right) -P\left( g_{A_{1}}(r),\ldots
		, g_{A_{n}}(r)\right) \right\vert <\frac{\left\vert P\left( x_{1},\ldots
			,x_{n}\right) \right\vert }{2}
	\end{equation*}
and $r\in [0,1]$. This is a contradiction, since we assumed that $P\left( g_{A_{1}}(r),\ldots, g_{A_{n}}(r)\right) = 0$, $\forall r \in [0,1]$. That is, we have algebraic independence. We let $\mathcal{G}$ stands for this free algebra (the algebra generated by the functions $g_A$ with $A \in \mathcal{A}$).
\end{proof}

The following remark is also an important tool, although it is, by itself, of independent interest.

\begin{remark}
Notice that, if  $r^{\prime }=c_{1}h_{1}+\ldots
+c_{n}h_{n}$  then, in every interval $]\alpha ,\beta \lbrack \subset
\lbrack 0,1]$, by choosing $c$ appropriately, there exists $r\in ]\alpha ,\beta \lbrack $ such that $$P\left(
g_{A_{1}}(r),\ldots , g_{A_{n}}(r)\right) =P\left( g_{A_{1}}(r^{\prime
}),\ldots , g_{A_{n}}(r^{\prime })\right) .$$
\end{remark}

Now, let us show the main result in this short note. Namely, the following theorem improves a result from \cite{A}.  More precisely, \cite[Theorem 2.7]{A}, regarding the algebraic genericity of the set of real functions that are not Riemann integrable (the result from \cite{A} is an immediate consequence of the following one). More particularly, in \cite[Theorem 2.7]{A} the authors constructed a $2^\mathfrak{c}$-dimensional space $V$ and a $\mathfrak{c}$-dimensional space $W$ of, respectively, Riemann integrable functions and continuous functions such that, for every $f\in V\setminus\{0\}$ and $g\in W\setminus\{0\}$, $f\circ g$ is not Riemann integrable. Here we improve this previous result in the best possible way (in terms of lineability), by showing that the previous $V$ and $W$ can be chosen to be free algebras both of the maximal possible dimension and size of the sets of generators. Let us show it.

\begin{theorem}
There exist 
\begin{itemize}
\item[(i.)] a $2^\mathfrak{c}$-generated free algebra $\mathcal{S} \subset \mathbb{R}^\mathbb{R}$ such that the restriction of any of its elements to any interval $[\alpha,\beta]$ is Riemann integrable and
\item[(ii.)]  a free algebra $\mathcal{C} \subset \mathbb{R}^{[0,1]}$ of continuous functions, having $\mathfrak{c}$-generators, 
\end{itemize}
such that $r \circ c$ is not Riemann integrable for any $r \in \mathcal{S}$ and $c \in \mathcal{C}$.
\end{theorem}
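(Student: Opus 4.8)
The plan is to realize both algebras through one mechanism: the inner maps will collapse a fixed fat Cantor set to a point while oscillating around it, and the outer maps will be genuinely oscillating at that point but only on a Lebesgue-null set, so that each composition is discontinuous on a set of positive measure. Concretely, fix a Smith--Volterra--Cantor set $K\subset[0,1]$ of positive Lebesgue measure $m(K)>0$ (nowhere dense, perfect), with complementary intervals $\{I_n\}_n$; I will force every nonzero $c\in\mathcal C$ to vanish on $K$ while staying nontrivial on the gaps near $K$, and every nonzero $r\in\mathcal S$ to oscillate at $0$ while being discontinuous only on a null set.

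For the continuous algebra $\mathcal C$ I would first fix a $\mathfrak c$-sized algebraically independent family in $C([0,1])$ vanishing at the endpoints. The explicit choice $U_\beta(x)=\sin(\pi x)\,e^{\lambda_\beta x}$, where $\{\lambda_\beta\}_{\beta<\mathfrak c}$ is a set of reals linearly independent over $\Q$, works: in any polynomial relation the frequencies $\sum_i m_i\lambda_{\beta_i}$ are pairwise distinct, so grouping the exponentials by their (real) frequency isolates each monomial, and the surviving factor $\sin(\pi x)^{|m|}$ then forces every coefficient to vanish. I would place a rescaled copy in each gap: on a compact subinterval $[p_n,q_n]\subset I_n$ set $c_\beta:=|I_n|\,U_\beta\!\big(\tfrac{\,\cdot\,-p_n}{q_n-p_n}\big)$, and let $c_\beta\equiv 0$ on the rest of $I_n$ and on $K$. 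Since $U_\beta(0)=U_\beta(1)=0$ and the factor $|I_n|\to 0$ along gaps approaching any point of $K$, each $c_\beta$ is continuous on $[0,1]$ and vanishes on $K$; scaling by the nonzero constant $|I_n|$ preserves algebraic independence on $[p_n,q_n]$, so $\mathcal C$ is free on the $\mathfrak c$ generators $\{c_\beta\}$, and every nonzero $c=Q(c_{\beta_1},\dots,c_{\beta_k})$ (no constant term) vanishes on $K$ yet is not identically zero on any gap $I_n$.

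For the Riemann integrable algebra $\mathcal S$ I would transplant the functions $g_A$ of Lemma~\ref{mainthm} onto a null set clustering at $0$. Take a measure-zero Cantor set $C_0=C_0^+\cup(-C_0^+)\subset[-1,1]$ with $0$ a two-sided accumulation point, put $C_0'=C_0\setminus\{0\}$, and choose a bijection $\sigma\colon C_0'\to[0,1]$ whose restriction to $C_0'\cap(0,\delta)$ and to $C_0'\cap(-\delta,0)$ is onto $[0,1]$ for every $\delta>0$ (possible as each of these sets has cardinality $\mathfrak c$). For $A$ in the independent family $\mathcal A$ of Lemma~\ref{mainthm} set $R_A:=g_A\circ\sigma$ on $C_0'$ and $R_A:=0$ off $C_0'$. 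Every element of the generated algebra $\mathcal S$ vanishes off $C_0$, so its discontinuity set lies in the null set $C_0$ and its restriction to any interval is Riemann integrable, giving (i). For nonzero $P$ without constant term, $P(R_{A_1},\dots,R_{A_n})$ is $0$ off $C_0$ and equals $P(g_{A_1},\dots,g_{A_n})\circ\sigma$ on $C_0'$; by the density of the joint range of the $g_A$ in $(-1,1)^n$ (the realization property in the proof of Lemma~\ref{mainthm}), along points of $C_0'$ arbitrarily close to $0$ on either side the values $P(g_{A_1},\dots,g_{A_n})(\sigma(t))$ are dense in the nondegenerate interval $J_P:=P([-1,1]^n)\ni 0$. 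Hence this element is not identically zero (so $\mathcal S$ is free on $2^{\mathfrak c}$ generators) and oscillates at $0$ from both sides, being $0$ on gaps and dense in $J_P$ on $C_0'$.

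Finally I would verify the composition. Fix nonzero $R=P(R_{A_1},\dots,R_{A_n})\in\mathcal S$ and $c=Q(c_{\beta_1},\dots,c_{\beta_k})\in\mathcal C$, and take $x_0\in K$, so $c(x_0)=0$. Since $c\not\equiv 0$ on the gaps accumulating at $x_0$ and vanishes at their endpoints, the intermediate value theorem shows that (on one fixed sign, say the positive one) $c$ attains every value in $(0,\rho(\delta))$ within the $\delta$-ball of $x_0$, with $\rho(\delta)>0$ and $\rho(\delta)\to 0$. Picking $v^*\in J_P\setminus\{0\}$ and points $t_j\in C_0'$, $t_j\to 0^+$, with $R(t_j)\to v^*$, the matching $\rho(\delta_j)>t_j$ for suitable $\delta_j\to 0$ yields $x_j\to x_0$ with $c(x_j)=t_j$ and $R(c(x_j))\to v^*$; choosing instead gap values of $c$ gives $x_j'\to x_0$ with $R(c(x_j'))=0$. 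Thus $R\circ c$ oscillates by at least $|v^*|$ at every $x_0\in K$, so its discontinuity set contains $K$ and the bounded function $R\circ c$ is not Riemann integrable. The main obstacle is exactly the construction of $\mathcal S$: functions continuous off a single point form a family of size only $\mathfrak c$, so reaching $2^{\mathfrak c}$ algebraically independent generators forces the wild behaviour onto an uncountable null set, while the demand that every nonzero polynomial still oscillate at $0$ forces a $g_A$-type density statement at that point; the tail-surjective transport $\sigma$ is precisely what reconciles these two requirements.
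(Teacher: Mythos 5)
Your construction is correct and, at its core, follows the same strategy as the paper: a fat (Smith--Volterra--Cantor) set $K$ carrying the discontinuities, an inner free algebra of $\mathfrak{c}$ continuous generators supported on the gaps and vanishing on $K$, an outer free algebra obtained by transporting the $2^{\mathfrak{c}}$-generated algebra $\mathcal{G}$ of Lemma \ref{mainthm} onto a null set clustering at $0$, and Lebesgue's criterion applied to the positive-measure set $K$ of oscillation points. The differences are in the gadgets: the paper generates $\mathcal{C}$ by Hamel-exponent powers $g^{h}$ of a single tent function (independence via distinct real exponents), where you use rescaled copies of $\sin(\pi x)e^{\lambda_\beta x}$ with $\mathbb{Q}$-independent frequencies; and the paper realizes the outer algebra via the Cantor function on affine copies $C_n\subset[\tfrac1{n+1},\tfrac1n)$ of the Cantor set with periodic extension, where you use an abstract transport map $\sigma$ from a symmetric null set onto $[0,1]$, surjective on every one-sided tail. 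Your two-sided symmetric choice of $C_0$ in fact handles the case where the inner function takes small \emph{negative} values more transparently than the paper's ``without loss of generality'' reduction to $\bigcup_{n>n_0}C_n$. One genuine (but easily repaired) flaw: the map $\sigma$ you describe cannot be a \emph{bijection} -- if the restriction of $\sigma$ to the proper subset $C_0'\cap(0,\delta)$ is already onto $[0,1]$, injectivity on the rest of $C_0'$ is impossible. You only ever use surjectivity (freeness of $\mathcal{S}$ and the density of values both follow from $\sigma$ being onto on each tail), so replacing ``bijection'' by ``surjection'' -- e.g.\ built by placing a Cantor set in each $[\tfrac1{j+1},\tfrac1j]$ and mapping each onto $[0,1]$, exactly as the paper does -- fixes the statement without altering anything downstream.
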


\begin{proof}	
Take a weighted Cantor set $T\subset \lbrack 0,1]$ (also known as Cantor-Smith-Volterra set), symmetric with respect to $1/2$. The set $[0,1]\setminus T$ is formed by the disjoint intervals $]a_{s},b_{s}[.$ Define the (continuous) function 
	\begin{equation*}
		g(x):=\left\{ 
		\begin{array}{cc}
			\min \{x-a_{s},b_{s}-x\}/s & \text{if }x\in ]a_{s},b_{s}[\text{ for some }s \in \N,
			\\ 
			0 & \text{otherwise}.%
		\end{array}%
		\right.
	\end{equation*}
	
	Denote by $\mathcal{C}$ the algebra generated by the powers $g^{h}$ with $h\in \mathcal{H}\subset \lbrack 1,2].$ It is a free algebra of continuous functions with $\mathfrak{c}$ generators. Next, consider $t:C\rightarrow
	\lbrack 0,1]$ the restriction of the Cantor function to the classical Cantor set $C$. Also, $h_{n}:[0,1[\rightarrow \lbrack \frac{1}{n+1%
	},\frac{1}{n}[$ is the increasing linear bijection and denote $%
	C_{n}=h_{n}(C\backslash \{1\})$ with $n\geq 1.$ 
	
	Now, for every $m$ belonging to the free algebra $\mathcal{G}$ constructed
	in the proof of Lemma \ref{mainthm}, let us define $r_{m}^{\ast
	}:[0,1]\rightarrow \mathbb{R}$ as 
	\begin{equation*}
		r_{m}^{\ast }(x)=\left\{ 
		\begin{array}{cc}
			m\left( t\left( h_{n}^{-1}(x)\right) \right)  & \text{if }x\in C_{n}\text{
				for some }n\geq 1, \\ 
			0 & \text{otherwise,}%
		\end{array}%
		\right. 
	\end{equation*}%
	which is well defined. Now, we let $r_{m}$ be the periodic extension of $%
	r_{m}^{\ast }.$ These functions are zero almost everywhere and, thus, they are locally $R$-integrable.  With all the previous elements at hand, denote by $\mathcal{S}$ the set of
	functions $r_{m}$ with (as we said earlier) $m$ belonging to the free
	algebra $\mathcal{G}$ previously constructed (see the proof of Lemma \ref%
	{mainthm}). The set $\mathcal{S}$ is a free algebra with $2^{\mathfrak{c}}$
	generators and the restriction to any interval $[\alpha ,\beta ]$ of any of
	its elements is Riemann integrable. If $c\in \mathcal{C}$ and $r\in \mathcal{S},$ we have that $r\circ c$ is not Riemann integrable.
	
	Take $x^{\prime }\in T$, then any neighborhood  $U$ of $x^{\prime }$
	contains infinitely many intervals of the form $]a_{s},b_{s}[$ of $%
	]0,1[\backslash T$ and with $a_{s},b_{s}\in T.$ 
	
	If $c$ is not the identically $0$ function, we have that  $c\left( \left[ a_{s},b_{s}\right] \right) $
	is a non-degenerated interval containing $0.$ Therefore, from some $n_{0}$ on, we have $c\left( \left[ a_{s},b_{s}\right] \right) $
	 contains one of the following sets: 
	 $$\bigcup _{n>n_{0}}C_{n} \, \text{ or } \, \bigcup
	_{n>n_{0}}-C_{n}.$$ 
	
	Recall that the reason why, previously, we introduced the functions $h_n$'s was to, actually, guarantee this previous fact. Without loss of generality, and for simplicity, suppose that it is actually the set  $\cup _{n>n_{0}}C_{n}.$ We have that  $t\left(
	h_{n}^{-1}(C_{n})\right) =[0,1[.$ For $m\in \mathcal{G}\backslash \mathcal{%
		\{}0\}$, there exists $\gamma _{m}\in \lbrack 0,1[$ such that $m\left( \gamma
	_{m}\right) \neq 0.$ That is, once we fix $m$, for every  neighborhood $U$ of $x'$ there exists $x_{U}$ such that $$r_{m}\left( c(x_{U})\right) =m\left( \gamma
	_{m}\right) \neq 0.$$ 
	
	Since $r_{m}\left( c(x^{\prime })\right) =r_{m}(0)=0,$ we have that  $r_{m}\circ c$ is not continuous at $x^{\prime },$ that is, 
	$r_{m}\circ c$ is not continuous in $T$ and, thus, it is not Riemann
	integrable, since $\lambda (T)>0$ (Lebesgue's theorem).
\end{proof}
\begin{bibdiv}
\begin{biblist}

\bib{book}{book}{
	author={Aron, R.M.},
	author={Bernal Gonz\'{a}lez, L.},
	author={Pellegrino, D.M.},
	author={Seoane Sep\'{u}lveda, J.B.},
	title={Lineability: the search for linearity in mathematics},
	series={Monographs and Research Notes in Mathematics},
	publisher={CRC Press, Boca Raton, FL},
	date={2016},
	pages={xix+308},
	isbn={978-1-4822-9909-0},
}

\bib{arongurariyseoane2005}{article}{
	author={Aron, R.M.},
	author={Gurariy, V.I.},
	author={Seoane-Sep\'{u}lveda, J.B.},
	title={Lineability and spaceability of sets of functions on $\Bbb R$},
	journal={Proc. Amer. Math. Soc.},
	volume={133},
	date={2005},
	number={3},
	pages={795--803},
	doi={10.1090/S0002-9939-04-07533-1},
}

\bib{A}{article}{
	author={Azagra, D.},
	author={Mu\~{n}oz-Fern\'{a}ndez, G.A.},
	author={S\'{a}nchez, V. M.},
	author={Seoane-Sep\'{u}lveda, J. B.},
	title={Riemann integrability and Lebesgue measurability of the composite function},
	journal={J. Math. Anal. Appl.},
	volume={354},
	date={2009},
	number={1},
	pages={229--233},
	doi={10.1016/j.jmaa.2008.12.033},
}

\bib{ppp}{article}{
	author={Bartoszewicz, A.},
	author={G\l \c{a}b, S.},
	author={Paszkiewicz, A.},
	title={Large free linear algebras of real and complex functions},
	journal={Linear Algebra Appl.},
	volume={438},
	date={2013},
	number={9},
	pages={3689--3701},
	doi={10.1016/j.laa.2013.01.013},
}

\bib{GurProb}{article}{
	author={Bernal-Gonz\'{a}lez, L.},
	author={Cabana-M\'{e}ndez, H. J.},
	author={Mu\~{n}oz-Fern\'{a}ndez, G. A.},
	author={Seoane-Sep\'{u}lveda, J. B.},
	title={On the dimension of subspaces of continuous functions attaining their maximum finitely many times},
	journal={Trans. Amer. Math. Soc.},
	volume={373},
	date={2020},
	number={5},
	pages={3063--3083},
	doi={10.1090/tran/8054},
}

\bib{BAMS2014}{article}{
	author={Bernal-Gonz\'{a}lez, L.},
	author={Pellegrino, D.},
	author={Seoane-Sep\'{u}lveda, J.B.},
	title={Linear subsets of nonlinear sets in topological vector spaces},
	journal={Bull. Amer. Math. Soc. (N.S.)},
	volume={51},
	date={2014},
	number={1},
	pages={71--130},
	doi={10.1090/S0273-0979-2013-01421-6},
}

\bib{BAMS2019}{article}{
	author={Ciesielski, K.C.},
	author={Seoane-Sep\'{u}lveda, J.B.},
	title={Differentiability versus continuity: restriction and extension theorems and monstrous examples},
	journal={Bull. Amer. Math. Soc. (N.S.)},
	volume={56},
	date={2019},
	number={2},
	pages={211--260},
	doi={10.1090/bull/1635},
}
\bib{fich}{article}{
	author={Fichtenholz, G.},
	author={Kantorovich, L.},
	title={Sur les op\'{e}rations dans l'espace des functions born\'{e}es},
	journal={Studia Math.},
	volume={5},
	date={1934},
	number={},
	pages={69-98},
}

\bib{gamezseoane2012}{article}{
	author={G{\'a}mez-Merino, J.L.},
	author={Seoane-Sep{\'u}lveda, J.B.},
	title={An undecidable case of lineability in $\Bbb {R}^{\Bbb {R}}$},
	journal={J. Math. Anal. Appl.},
	volume={401},
	date={2013},
	number={2},
	pages={959--962},
}

\bib{h}{article}{
	author={Hausdorff, F.},
	title={Uber zwei Satze von G. Fichtenholz und L. Kantorovich},
	language={German},
	journal={Studia Math.},
	volume={6},
	date={1936},
	pages={18--19},
}

\bib{J}{book}{
	author={Jech, T.},
	title={Set theory},
	series={Springer Monographs in Mathematics},
	note={The third millennium edition, revised and expanded},
	publisher={Springer-Verlag, Berlin},
	date={2003},
	pages={xiv+769},
}

\bib{sierpinskizygmund1923}{article}{
	author={Sierpi\'nski, W.},
	author={Zygmund, A.},
	title={Sur une fonction qui est discontinue sur tout ensemble de puissance du continu},
	journal={Fund. Math.},
	volume={4},
	date={1923},
	pages={316--318},
}

\end{biblist}
\end{bibdiv}

\end{document}